\newtheorem{theorem}{Theorem}
\newtheorem{proposition}[theorem]{Proposition}
\begin{document}

\title{Circuit presentation and lattice stick number with exactly 4 $z$-sticks}
\author[H. Kim]{Hyoungjun Kim}
\address{Institute of Mathematical Sciences, Ewha Womans University, 52, Ewhayeodae-gil, Seodaemun-gu,
 	Seoul 03760, Korea}
\email{kimhjun@korea.ac.kr}
\author[S. No]{Sungjong No}
\address{Institute of Mathematical Sciences, Ewha Womans University, 52, Ewhayeodae-gil, Seodaemun-gu,
 	Seoul 03760, Korea}
\email{sungjongno84@gmail.com}

\thanks{2010 Mathematics Subject Classification: 57M25, 57M27}
\thanks{Key words and phrases: knot, link, lattice stick number, rational link, pillowcase form, circuit presentation}
\thanks{The first author(Hyoungjun Kim) was supported by Basic Science Research Program through the National Research Foundation of Korea (NRF) funded by the Korea government Ministry of Education(2009-0093827) and Ministry of Science and ICT(NRF-2018R1C1B6006692).}
\thanks{The corresponding author(Sungjong No) was supported by Basic Science Research Program through the National Research Foundation of Korea (NRF) funded by the Korea government Ministry of Education(2009-0093827).}

\begin{abstract}
The lattice stick number $s_L(L)$ of a link $L$ is defined to be the minimal number of straight line segments required to construct a stick presentation of $L$ in the cubic lattice.
Hong, No and Oh \cite{HNO1} found a general upper bound $s_L(K) \leq 3 c(K) +2$.
A rational link can be represented by a lattice presentation with exactly 4 $z$-sticks.

An $n$-circuit is the disjoint union of $n$ arcs in the  lattice plane $\mathbb{Z}^2$.
An $n$-circuit presentation is an embedding obtained from the $n$-circuit by connecting each $n$ pair of vertices with one line segment above the circuit.
By using a 2-circuit presentation, we can easily find the lattice presentation with exactly 4 $z$-sticks.

In this paper, we show that an upper bound for the lattice stick number of rational $\dfrac{p}{q}$-links realized with exactly 4 $z$-sticks is $2p+6$.
Furthermore it is $2p+5$ if $L$ is a 2-component link.
\end{abstract}

\maketitle

\section{Introduction} \label{sec:int}

A {\em link} is a disjoint union of simple closed curves in $\mathbb{R}^3$.
A one component link is called a {\em knot}.
A polygonal link is a link whose image in $\mathbb{R}^3$ is the union of a finite set of line segments, called {\em sticks}.
Let the {\em stick number} $s(L)$ of a link $L$ be the least number of straight sticks need to make a link $L$.
In particular, stick number of the trefoil knot $3_1$ is equal to 6.

\begin{figure}[h!]
\includegraphics[scale=0.4]{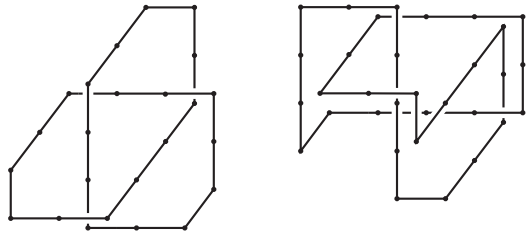}
\caption{Trefoil knot and figure-8 knot in $\mathbb{Z}^3$}
\label{fig:lat3141}
\end{figure}

Let the {\em lattice stick number} $s_L(L)$ of a link $L$ be the least number of sticks need to make a link $L$ in the cubic lattice $\mathbb{Z}^3$ which is $(\mathbb{R} \times \mathbb{Z} \times \mathbb{Z}) \cup (\mathbb{Z} \times \mathbb{R} \times \mathbb{Z}) \cup (\mathbb{Z} \times \mathbb{Z} \times \mathbb{R})$.
Huh and Oh~\cite{{HO1},{HO2}} showed that there are exactly two non-trivial knots whose lattice stick numbers are at most 14.
More precisely, the trefoil knot $3_1$ has lattice stick number 12, and the figure-8 knot $4_1$ has lattice stick numbers 14, which are depicted in Figure~\ref{fig:lat3141}.
Huang and Yang~\cite{HY} showed that $5_1$ and $5_2$ knot have lattice stick number 16.
Adams et al.~\cite{ACCJSZ} showed that $s_L(8_{20}) = s_L(8_{21}) = s_L(9_{46}) = 18$, $s_L(4^2_1) = 13$ and $s_L(T_{p,p+1}) = 6p$ for $p \geq 2$ where $T_{p,p+1}$ is a $(p,p+1)$-torus knot.
To find the exact values of the lattice stick number of these knots, they used a lower bound on lattice stick number in terms of bridge number, $s_L(K) \geq 6 b(K)$, which was proved by Janse van Rensburg and Promislow~\cite{JP}.
Furthermore Diao and Ernst \cite{DE} found a general lower bound in terms of crossing number $c(K)$
which is $s_L(K) \geq 3 \sqrt{c(K)+1} +3$ for a nontrivial knot $K$.
Hong, No and Oh \cite{HNO1} found a general upper bound $s_L(K) \leq 3 c(K) +2$, and moreover $s_L(K) \leq 3 c(K) - 4$ for a non-alternating prime knot.
They~\cite{HNO2} also showed that $s_L(2^2_1)=8$, $s_L(2^2_1 \sharp 2^2_1)=s_L(6^3_2)=s_L(6^3_3)=12$, $s_L(4^2_1)=13$, $s_L(5^2_1)=14$ and any other non-split links have stick numbers at least 15.

Henceforth, a stick in $L$ which is parallel to the $x$-axis is called an {\em $x$-stick\/}.
An {\em $x$-level\/} $k$ for some integer $k$ is a $yz$-plane whose $x$-coordinate is $k$.
Similarly define for $y$-stick or $z$-stick, and $y$-level or $z$-level.
Then each $y$-stick and $z$-stick lies entirely on an $x$-level.
It is known that the knot in the cubic lattice must have at least 4 $z$-sticks to be a nontrivial knot.
Note that the only 2-bridge links and the trivial knot can be represented using exactly 4 $z$-sticks.
So we study the representations of 2-bridge links using 4 $z$-sticks.

An  {\em $n$-circuit} is the disjoint union of $n$ arcs in lattice plane $\mathbb{Z}^2$ such that every endpoint of arcs is labeled $v_1,v'_1,\dots,v_n,v'_n$.
Note that the coordinate of $v_i$ is $(x_i,y_i)$ and the coordinate of $v'_i$ is $(x'_i,y'_i)$.
A circuit is called {\em regular} when it satisfies the following two conditions;

For each $i,j \in \{ 1,\dots,n \}$,
\begin{itemize}
\item $x_i=x'_i$ or $y_i=y'_i$,
\item $[x_i,x'_i] \cap [x_j,x'_j] = \phi$ or $[y_i,y'_i] \cap [y_j,y'_j] = \phi$, if $i \ne j$.
\end{itemize}
An {\em $n$-circuit presentation} is an embedding obtained from the $n$-regular circuit by connecting each $n$ pair of vertices $v_i$ and $v'_i$ with one line segment above the circuit.
Figure \ref{fig:ecp} is an example of a 3-circuit presentation.

\begin{figure}[h!]
\includegraphics[scale=0.75]{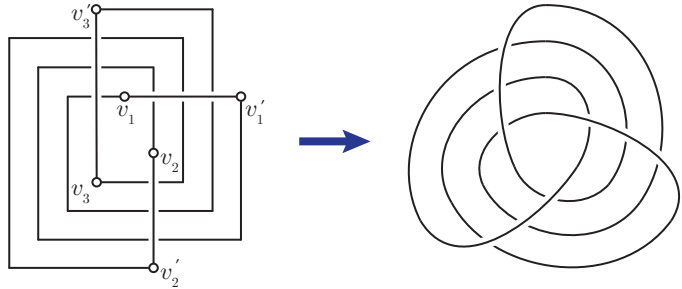}
\caption{Examples of a 3-circuit presentation of $(4,3)$-torus knot}
\label{fig:ecp}
\end{figure}

In this paper, we show the following theorem by using 2-circuit presentations.

\begin{theorem}\label{thm:main}
An upper bound for the lattice stick number of rational $\dfrac{p}{q}$-links realized with exactly 4 $z$-sticks is $2p+6$.
Furthermore it is $2p+5$ if $L$ is a 2-component link.
\end{theorem}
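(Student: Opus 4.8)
The plan is to realize the rational link in its pillowcase (Schubert normal) form and then read off an economical $2$-circuit from that picture. Since we insist on exactly $4$ $z$-sticks and use a $2$-circuit presentation, the stick budget splits cleanly: the two bridges that join $v_1$ to $v_1'$ and $v_2$ to $v_2'$ each contribute one horizontal stick together with the two vertical $z$-sticks used to climb above the circuit and return to it, accounting for $4$ $z$-sticks and $2$ horizontal sticks, that is, $6$ sticks in total. Everything else lives in the planar circuit in $\mathbb{Z}^2$. Thus I would reduce the theorem to the single claim that the two planar arcs of the circuit can be drawn using at most $2p$ horizontal ($x$- and $y$-) sticks when the $\frac{p}{q}$-link is a knot and at most $2p-1$ such sticks when it is a $2$-component link; adding the fixed $6$ then yields $2p+6$ and $2p+5$ respectively.

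To build the circuit I would start from the standard presentation of the rational $\frac{p}{q}$-link as a curve of slope $\frac{q}{p}$ on the pillowcase, unfolded to a family of straight strands in a rectangle whose four corners carry the two over-arcs. First I would take these two over-arcs to be the bridges, lifted to $z$-level $1$, and lay the remaining strands into the plane $z=0$ as monotone lattice staircases, orienting each slope-$\frac{q}{p}$ segment so that its $x$- and $y$-steps are shared between neighbouring strands as much as possible. The regularity conditions on the circuit — that each bridge join two points sharing a coordinate, and that the $x$- or $y$-projections of distinct bridges be disjoint — are exactly what is needed to guarantee that this lift is an embedding and that the two bridges can be pushed to a common height without colliding. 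Consecutive collinear unit-sticks are then merged, so that the final tally counts genuine lattice sticks rather than unit edges.

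The heart of the argument — and the step I expect to be the main obstacle — is the precise count of horizontal sticks in the staircase, showing it never exceeds $2p$ (or $2p-1$). Here I would induct on the length of the continued-fraction expansion of $\frac{p}{q}$, peeling off one twist region at a time via the Euclidean algorithm and tracking how each elementary twist lengthens the staircase, using the convergent recurrence $p_k=a_k p_{k-1}+p_{k-2}$ to dominate the accumulated stick count by $2p$. I expect the worst case to be $q=1$, namely the torus link $T(2,p)$, whose single twist column forces a serpentine lattice path realizing the full $2p$ (respectively $2p-1$) horizontal sticks; this is consistent with the known exact values $s_L(3_1)=12$, $s_L(5_1)=16$, and $s_L(4^2_1)=13$. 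Finally, the dichotomy in the additive constant is governed entirely by the parity of $p$: the $\frac{p}{q}$-link is a knot exactly when $p$ is odd and a $2$-component link exactly when $p$ is even, and this same parity should determine whether the serpentine closes up with an even or odd number of turns, saving precisely one horizontal stick in the $2$-component case. Verifying that this saving is always available, and that no choice of $q$ forces more sticks than the torus case for a fixed $p$, is the technical core that I anticipate will require the most care.
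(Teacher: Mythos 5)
Your overall frame---pillowcase form, a $2$-circuit presentation with two bridges lifted above the plane, and a budget split between bridges and under-arcs---matches the paper's setup, and your parity observation (the $\frac{p}{q}$-link is a knot iff $p$ is odd) is the same fact the paper exploits for the $2p+5$ case. But the claim you reduce everything to, namely that \emph{both} planar arcs can be drawn in the single level $z=0$ with at most $2p$ (resp.\ $2p-1$) sticks in total, is not what the paper proves, and this is a genuine gap. The paper's explicit circuit for the $(p,q)$-form (obtained via the linear transformation $\bigl(\begin{smallmatrix} p & -q \\ p & q \end{smallmatrix}\bigr)$ and a sequence of line-moving operations) has $2p$ vertical and $2p-2$ horizontal segments, so \emph{each} of the two planar arcs $P_1, P_2$ alone takes $2p-1$ sticks; the weaving pattern of underpasses that the two over-bridges impose on the under-arcs is exactly what generates this $\sim 4p$ planar count. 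Your proposed induction on the continued-fraction expansion, with the convergent recurrence $p_k = a_k p_{k-1} + p_{k-2}$, offers no mechanism for halving that count while keeping both arcs at one level, and I see no reason to believe the $2p$ bound is attainable in that restricted form; as it stands your accounting would stop near $4p+4$, not $2p+6$.

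The missing idea is to use a third $z$-level. After building the circuit (with both arcs at $z=0$, total $4p+4$ sticks including bridges), the paper pushes the \emph{entire} arc $P_2$ down to $z$-level $-1$, where it no longer interacts with $P_1$ and can therefore be replaced by a path of at most two sticks---and by a single stick when $L$ is a $2$-component link, because in that case the endpoints of $P_2$ are $v_2=(q,-q)$ and $v_2'=(p+q,-q)$, which share a $y$-coordinate. The four $z$-sticks are merely lengthened to span from level $-1$ up to the bridge level, so the presentation still has exactly four $z$-sticks. This single move saves $2p-3$ sticks at a stroke; combined with one further saving from rerouting the bridge $l_1$ over the top at level $2$, it converts $4p+4$ into $2p+6$, resp.\ $2p+5$. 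So the correct dichotomy mechanism is not the parity of turns of a planar serpentine, but whether the shortened arc $P_2$ has its two endpoints on a common lattice line ($p$ even) or not ($p$ odd). Without this level-$(-1)$ shortcut, or a genuine proof of your planar $2p$ claim, the argument does not reach the stated bounds.
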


Note that, $3_1$ knot and $5_1$ knot are rational $\frac{3}{1}$-knot and $\frac{5}{1}$-knot, respectively.
The upper bound of the main theorem for these knot are 12 and 16, respectively.
These values are exactly same with their lattice stick number.
In addition, the upper bound of $4^2_1$ link is 13, and this value is also exactly same with its lattice stick number.

The upper bound of the main theorem is the number of sticks to represent a lattice knot with exactly 4 $z$-sticks.
So the general upper bound of lattice stick number is better than our result.
However, for some cases, our result is better.
We consider the general upper bound of the lattice stick number for the rational $(p,1)$-link.
The upper bound of the main theorem of a rational $(p,1)$-link is $2p+6$.
Since the crossing number of a rational $(p,1)$-link is $p$, the general upper bound of lattice stick number of this link is $3p+2$.
Therefore, for a rational $(p,1)$-link, the result of the main theorem is better than the existing result when $p > 4$.

\section{Rational tangle in pillowcase form}\label{sec:cp}

An {\em $n$-tangle} is a proper embedding of the disjoint union of $n$ arcs into a 3-ball.
Note that embedding must send the endpoints of the arcs on the boundary of the ball.
Two $n$-tangles are {\em equivalent} if there is an ambient isotopy of one tangle to the other keeping the boundary of the ball fixed.

\begin{figure}[h!]
\includegraphics[scale=0.6]{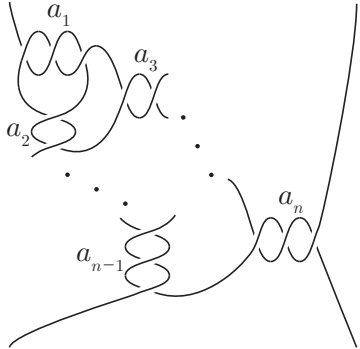}
\caption{The rational tangle with $T(a_1,\dots,a_n)$}
\label{fig:rat}
\end{figure}

A 2-tangle is called a rational tangle with Conway notation $T(a_1,\dots,a_n)$ as drawn in figure~\ref{fig:rat}.
It is called a rational $\dfrac{p}{q}$-tangle if
$$\cfrac{p}{q} = a_n + \cfrac{1}{a_{n-1} + \cfrac{1}{ \ddots +\cfrac{1}{a_2+ \cfrac{1}{a_1} }}}.$$
Conway's Theorem states that a rational $\dfrac{p}{q}$-tangle is ambient isotopic to a rational $\dfrac{p'}{q'}$-tangle if and only if $\dfrac{p}{q}=\dfrac{p'}{q'}$.
It is first stated in~\cite{Co}.

\begin{figure}[h!]
\includegraphics[scale=1]{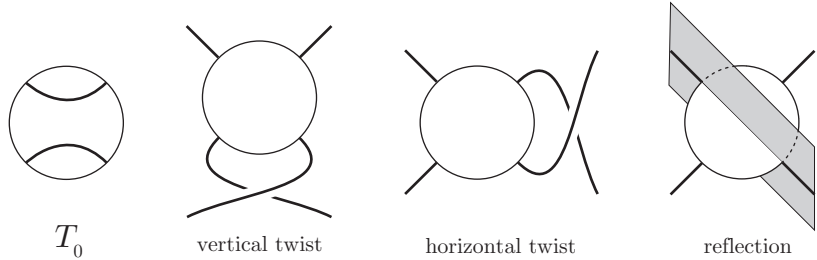}
\caption{Rational tangles and its operations}
\label{fig:twist}
\end{figure}

The {\em vertical twist} is a homeomorphism of a 2-tangle which switches the location of the two endpoints at the bottom by using a half-twist as drawn in Figure~\ref{fig:twist}.
Then the new crossing obtained through this operation or its inverse is called a {\em vertical crossing}.
Similarly, we can also define the {\em horizontal twist} and {\em horizontal crossing}.
It is well known that any positive rational tangle can be obtained from $T_0$ by using a sequence of vertical and horizontal twists(without using their inverses).
If the rational tangle is finished with the vertical crossing, we change it to be finished with the horizontal crossing by using the reflection in the shaded plane as drawn in Figure~\ref{fig:twist}.
A rational $\dfrac{p}{q}$-link is obtained from a rational $\dfrac{p}{q}$-tangle by joining the top endpoints together and the bottom endpoints also together as drawn in figure~\ref{fig:rl}.

\begin{figure}[h!]
\includegraphics[scale=0.75]{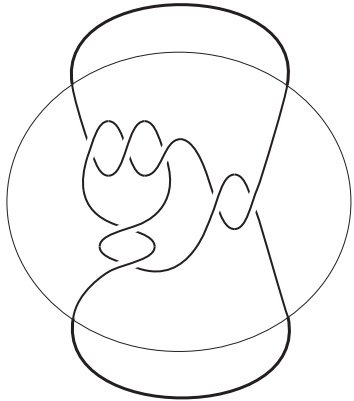}
\caption{A rational $\dfrac{17}{7}$-link}
\label{fig:rl}
\end{figure}

Now we introduce the another description of a rational tangle.
A rational tangle is called the {\em pillowcase form} if the arcs of the tangle lies on its boundary as drawn in figure~\ref{fig:pillow}.
Let $t$(or $s$) be the number of the gaps between the arcs across the top and bottom(or each side, respectively).
Note that $t=0$ means that the arc connecting two top(or bottom) vertices lies on the top(or bottom) boundary.
Similarly, $s=0$ means that the arc connecting two left(or right) vertices lies on the left(or right) side boundary.
Then we call this $(t,s)$-form.

\begin{figure}[h!]
\includegraphics[scale=1]{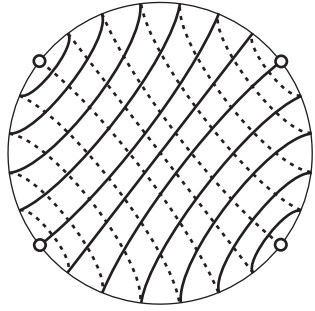}
\caption{The rational tangle in the pillowcase form with $(7,5)$}
\label{fig:pillow}
\end{figure}

For these two different descriptions of a rational tangle, we can obtain Proposition~\ref{prop:cw}.
The result of this proposition is introduced by Cromwell~\cite{C}.

\begin{proposition}\label{prop:cw}
If $(t,s)$-form is equivalent to the rational $\dfrac{p}{q}$-tangle then $(t,s)=(p,q)$.
\end{proposition}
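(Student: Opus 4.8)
The plan is to prove the slightly stronger statement that the $(t,s)$-form is \emph{itself} the rational $\frac{t}{s}$-tangle; Proposition~\ref{prop:cw} then follows at once from Conway's Theorem. To set this up I would describe both presentations as being built inductively from the trivial tangle $T_0$ by the vertical and horizontal twists introduced around Figure~\ref{fig:twist}. On the continued-fraction side, recall that a horizontal twist adds $1$ to the fraction, so it sends $\frac{p}{q}$ to $\frac{p+q}{q}$, while a vertical twist sends $\frac{p}{q}$ to $\frac{p}{p+q}$; equivalently, the two operations act on the column vector $(p,q)^{T}$ by the matrices $\left(\begin{smallmatrix}1 & 1\\ 0 & 1\end{smallmatrix}\right)$ and $\left(\begin{smallmatrix}1 & 0\\ 1 & 1\end{smallmatrix}\right)$. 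Since the text preceding the proposition already records that every positive rational tangle is obtained from $T_0$ by such a sequence of twists, it suffices to track the pair $(t,s)$ along the same sequence.

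The heart of the argument is to verify that the pillowcase parameters obey exactly this recursion: a horizontal twist should send $(t,s)$ to $(t+s,s)$ and a vertical twist should send $(t,s)$ to $(t,t+s)$. To check this I would analyze the effect of a single half-twist directly on the arcs drawn on the pillowcase boundary as in Figure~\ref{fig:pillow}. A horizontal half-twist transposes the two endpoints on one of the horizontal edges and reroutes the arcs; as the rerouted arcs wind past the newly created crossing they must traverse each of the $s$ side-gaps once more, so the number $t$ of top/bottom gaps increases by $s$ while $s$ is unchanged, and the vertical twist behaves symmetrically. Carrying out this bookkeeping carefully is the main obstacle: one has to confirm that no gaps are created or destroyed on the edge being twisted, and that straightening the arcs back into a genuine pillowcase form preserves the counts. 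I expect this is best handled by a clear figure together with a short case analysis on whether the arcs adjacent to the twisted edge already lie on the boundary ($t=0$ or $s=0$) or not.

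With the recursion in hand, the base case matches immediately: $T_0$ is the $\frac{0}{1}$-tangle, and its pillowcase form has its two arcs lying along the top and bottom boundary with a single gap on each side, i.e.\ $(t,s)=(0,1)=(p,q)$. Hence induction on the length of the twist sequence shows that the $(t,s)$-form coincides with the rational $\frac{t}{s}$-tangle. Finally, suppose the $(t,s)$-form is equivalent to the rational $\frac{p}{q}$-tangle. By Conway's Theorem $\frac{t}{s}=\frac{p}{q}$. Because the two arcs of a rational tangle constitute a single family of parallel lines on the pillowcase, one must have $\gcd(t,s)=1$, since any common factor would split the straight-line picture into extra components; as $\gcd(p,q)=1$ by convention, the equality of these two reduced fractions forces $(t,s)=(p,q)$, which is the claim.
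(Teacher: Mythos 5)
Your proposal is correct and takes essentially the same route as the paper's proof: the key lemma is the identical twist recursion on pillowcase parameters (horizontal crossing sends $(t,s)$ to $(t+s,s)$, vertical sends it to $(t,t+s)$, justified in the paper by the same brief $180^{\circ}$-twist counting argument you sketch), followed by the same induction along a twist sequence starting from $T_0=(0,1)$ and the same final appeal to Conway's Theorem. The only difference is presentational: by tracking the tangle fraction directly under each twist ($\frac{p}{q}\mapsto\frac{p+q}{q}$ horizontally, $\frac{p}{q}\mapsto\frac{p}{p+q}$ vertically) you sidestep the paper's odd/even case split and reflection bookkeeping for the Conway-notation continued fraction, and your explicit observation that $\gcd(t,s)=1$ makes the concluding identification of reduced fractions slightly more transparent than the paper's bare appeal to uniqueness.
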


\begin{proof}
We first claim that a $(t,s)$-form turns into the $(t,t+s)$-form when adding a vertical crossing, and turns into the $(t+s,s)$-form when adding a horizontal crossing.
Adding a vertical crossing to a tangle is corresponding to twisting the tangle by $180^{\circ}$ along the vertical crossing.
In this case, the arcs intersect each side exactly $t$ more times.
So the number of the gaps between the arcs across the each side becomes $t+s$.
Similarly, the number of the gaps between the arcs across the top and bottom becomes $t+s$ when adds a horizontal crossing.

Let the rational $\dfrac{p}{q}$-tangle be depicted by Conway notation $T(a_1,\dots,a_n)$.
Now we construct a $(t,s)$-form of the rational $\dfrac{p}{q}$-tangle.
Similar to the construction of the rational tangle, we also construct the pillowcase form from $T_0$ by using a sequence of vertical and horizontal twists.
Note that $T_0$ is $(0,1)$-form.
First add $a_1$ horizontal crossings to $T_0$ then $(0,1)$-form is changed to $(a_1,1)$-form.
Next add $a_2$ vertical crossings then it is changed to $(a_1,a_1 a_2 + 1)$-form.
Repeat this operation for the $n$-th time.
Let a pillowcase form which represents this operation until the $k$-th time be $(t_k,s_k)$-form.

We will show that if $k$ is odd then a $(t_k,s_k)$-form is equivalent to a $\dfrac{t_k}{s_k}$-tangle, and if $k$ is even then the $(s_k,t_k)$-form which is obtained from the $(t_k,s_k)$-form by using the reflection, is equivalent to an $\dfrac{s_k}{t_k}$-tangle.
Now we use the induction on $k$.

When $k=1$, an $(a_1,1)$-form is equivalent to an $a_1$-tangle.
When $k$ is odd, suppose that the $(t_k,s_k)$-form is equivalent to the $\dfrac{t_k}{s_k}$-tangle.
It sufficient to show that the $(s_{k+1},t_{k+1})$-form is equivalent to the $\dfrac{s_{k+1}}{t_{k+1}}$-tangle, since $k+1$ is even.
The $(t_{k+1},s_{k+1})$-form is obtained from the $(t_k,s_k)$-form by adding $a_{k+1}$ vertical crossings.
Then the $(t_{k+1},s_{k+1})$-form is equivalent to the $(t_k,s_k+a_{k+1}t_k)$-form, by the above claim.
On the other hand, a rational tangle with Conway notation $T(a_1,\dots,a_{k+1})$ is an $\left( a_{k+1} + \dfrac{1}{\frac{t_k}{s_k}} \right)$-tangle, so it is an $\left( \dfrac{a_{k+1}t_k+s_k}{t_k} \right)$-tangle.

When $k$ is even, suppose that the $(s_k,t_k)$-form is equivalent to the $\dfrac{s_k}{t_k}$-tangle.
Similar to previous case, it is sufficient to show that the $(t_{k+1},s_{k+1})$-form is equivalent to the $\dfrac{t_{k+1}}{s_{k+1}}$-tangle, since $k+1$ is odd.
The $(t_{k+1},s_{k+1})$-form is obtained from the $(t_k,s_k)$-form by adding $a_{k+1}$ horizontal crossings.
Then the $(t_{k+1},s_{k+1})$-form is equivalent to the $(t_k+a_{k+1}s_k,s_k)$-form, by the above claim.
On the other hand, a rational tangle with Conway notation $T(a_1,\dots,a_{k+1})$ is an $\left( a_{k+1} + \dfrac{1}{\frac{s_k}{t_k}}\right)$-tangle, so it is a $\left( \dfrac{t_k+a_{k+1}s_k}{s_k}\right)$-tangle.

We remark that the $(t,s)$-form is constructed from a rational $\dfrac{p}{q}$-tangle.
If $n$ is odd, the $(t,s)$-form is equivalent to the $(t_n,s_n)$-form, and it is also equivalent to a rational $\dfrac{t_n}{s_n}$-tangle, so it is a $\dfrac{t}{s}$-tangle.
If $n$ is even, by the definition of the rational tangle, a $(t,s)$-form is equivalent to a $(s_n,t_n)$-form.
Then the $(s_n,t_n)$-form is equivalent to a rational $\dfrac{s_n}{t_n}$-tangle, so it is a $\dfrac{t}{s}$-tangle.
In both cases, $p=t$ and $q=s$, by the uniqueness of a rational tangle.
Therefore the $(p,q)$-form is equivalent to the rational $\dfrac{p}{q}$-tangle.
\end{proof}

\section{Proof of main theorem}\label{sec:main}

Let $L$ denote a rational $\dfrac{p}{q}$-link such that $p$ and $q$ are coprime.
By proposition~\ref{prop:cw}, there is an associated diagram as shown in Figure~\ref{fig:slope} (a).
In this figure, the central line segments consists of lines of slope $\pm ~\dfrac{p}{q}$ in the square with four vertices $A(0,0)$, $B(0,-1)$, $C(1,-1)$ and $D(1,0)$.
$l_1$(or $l_2$) is an arc connecting $A$ and $D$(or $B$ and $C$, respectively).
We change this diagram to new diagram as drawn in Figure~\ref{fig:slope} (b), by using the linear transformation
$\begin{pmatrix}
p & -q\\
p & q\\
\end{pmatrix}$.
Then all central line segments are parallel to either the $x$-axis or the $y$-axis.
We say that the line segment is horizontal(or vertical) when it is parallel to the $x$-axis
(or the $y$-axis).

\begin{figure}[h!]
\includegraphics[scale=1]{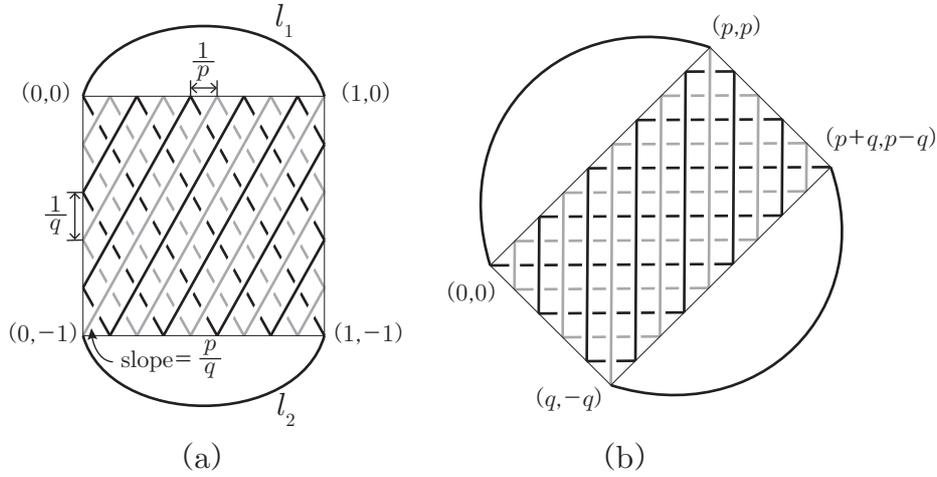}
\caption{A $(p,q)$-form in $\mathbb{Z}^2$}
\label{fig:slope}
\end{figure}

Now, we will make a regular 2-circuit to obtain the link in the cubic lattice with 4 $z$-sticks from the diagram in figure~\ref{fig:slope} (b).
First, transform the diagram for setting into $\mathbb{Z}^2$ except the arcs $l_1$ and $l_2$.
Move the horizontal lines by the following operation.
For each $i \in \{ 1,2,\dots,q \}$, move the line segment between $(p-i,p-i)$ and $(p+i,p-i)$ to the line segment between $(p-i,p+i)$ and $(p+i,p+i)$.
Similarly, for each $i \in \{ 1,2,\dots,q \}$, move the line segment between $(q-i,-q+i)$ and $(q+i,-q+i)$ to the line segment between $(q-i,-q-i)$ and $(q+i,-q-i)$.
During this process, each moved horizontal line segment is separated from its adjacent vertical line segments.
To connect these separated line segments, extend the vertical line segments as drawn in Figure~\ref{fig:circuit} (a).

\begin{figure}[h!]
\includegraphics[scale=0.8]{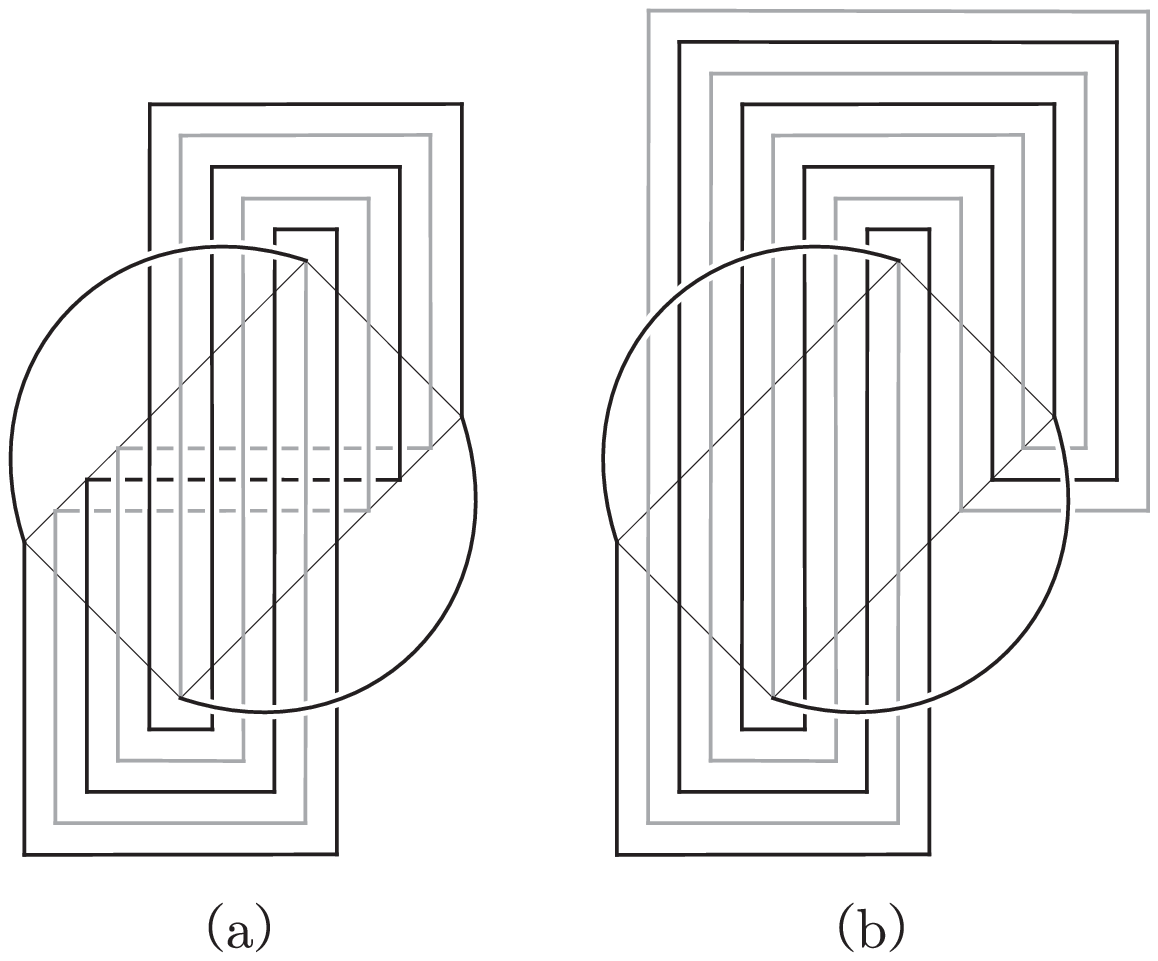}
\caption{How to move the horizontal lines}
\label{fig:circuit}
\end{figure}

We can easily check that there remain $p-q-1$ horizontal line segments that intersect vertical lines.
Since these horizontal line segments can not apply the above process,
change the line segment between $(p-q-i,p-q-i)$ and $(p+q-i,p-q-i)$ to the path which passes through to three points $(p-q-i,p+q+i)$, $(p+q+i,p+q+i)$ and $(p+q+i,p-q-i)$, for each $i \in \{ 1,2,\dots,p-q-1 \}$ as drawn in Figure~\ref{fig:circuit} (b).
When $p - q = 1$ we do not need this operation.

To obtain a regular 2-circuit, we extend the line segment between $(0,0)$ and $(0,-2q)$ to the line segment between $(0,p)$ and $(0,-2q)$.
Similarly, extend the line segment between $(p+q,p-q)$ and $(p+q,p+q)$ to the line segment between $(p+q,-q)$ and $(p+q,p+q)$.
If the extended vertical line segment meets some horizontal line segments then drop these horizontal line segments, and extend the vertical line segments attached to them until they reach the dropped horizontal line segments as drawn in Figure~\ref{fig:regcir} (a).
Let $(0,p)$, $(p,p)$, $(q,-q)$ and $(p+q,-q)$ denote $v_1$, $v'_1$, $v_2$ and $v'_2$ respectively.
Then the resulting diagram is a regular 2-circuit.
Note that there are $2p$ vertical line segments and $2p-2$ horizontal line segments.

\begin{figure}[h!]
\includegraphics[scale=0.6]{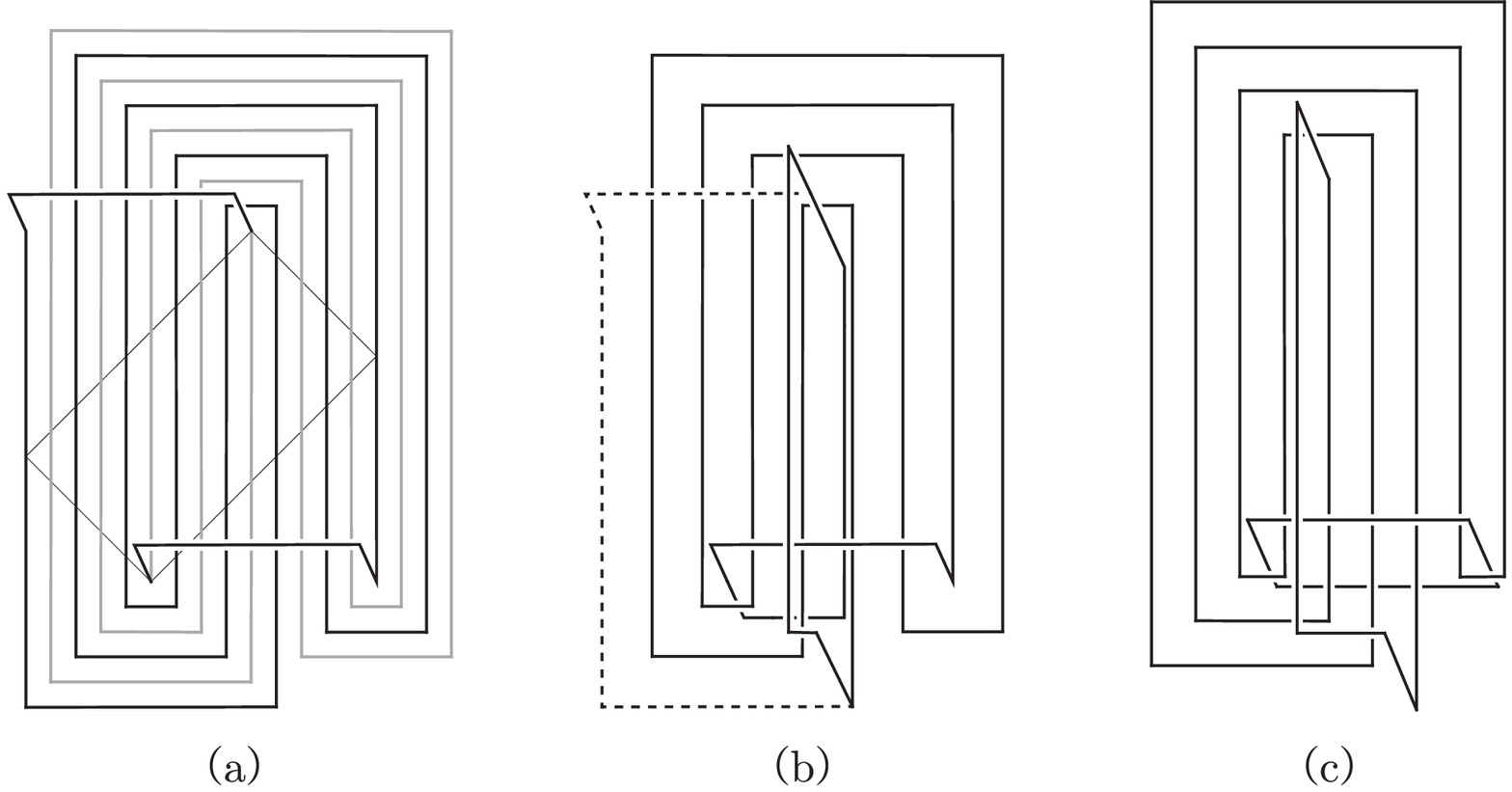}
\caption{A rational $\dfrac{p}{q}$-link in $\mathbb{Z}^3$}
\label{fig:regcir}
\end{figure}

Let $P_1$ be the path starting from $v_1$, and $P_2$ be the other.
Then the vertical line segments of $P_1$ have even $x$-coordinate, and the vertical line segments of $P_2$ have odd $x$-coordinate.
Since there are $2p$ vertical lines, each $P_1$ and $P_2$ has $p$ vertical line segments.
On the other hand, each path starts and finishes with vertical line segments.
Hence each $P_1$ and $P_2$ consists of $2p-1$ line segments.

Next, connect $v_1$ and $v'_1$ by an arc $l_1$ which consists of one horizontal line segment in $z$-level 1, and two $z$-sticks.
Similarly, connect $v_2$ and $v'_2$ by an arc $l_2$ as drawn in Figure~\ref{fig:regcir} (a).
Therefore, this 2-circuit presentation can be represented by $4p+4$ sticks in the cubic lattice.

Now we transform this presentation for reducing number of sticks to represent.
First, change the path which passes through $(p,p,0)$, $(p,p,1)$, $(0,p,1)$, $(0,p,0)$, $(0,-2q,0)$, and $(2q,-2q,0)$ to $(p,p,0)$, $(p,p,2)$, $(p,-2q,2)$, $(2q,-2q,2)$, and $(2q,-2q,0)$.
Then we can reduce the number of sticks by one.
Next, push down $P_2$ to $z$-level $-1$.
Then $P_2$ can be replaced by a path consisting at most two sticks as drawn in Figure~\ref{fig:regcir} (b).
Especially, if $L$ is a 2-component link, $P_2$ can be replaced by a stick because $v_2$ and $v'_2$ are the endpoints of $P_2$ as drawn in Figure~\ref{fig:regcir} (c).
Therefore for the rational $\dfrac{p}{q}$-link $L$, the upper bound of the lattice stick number of $L$ with exactly 4 $z$-sticks is $2p + 6$.
Furthermore it is $2p+5$ if $L$ is a 2-component link.

%
%===========================================
%
%\section{Conclusion}
%
%
%
% In this paper we constructed a lattice link which contain exactly 4 $z$-sticks by $2p+6$ sticks if $L$ is a knot and $2p+5$ sticks if $L$ is a 2-component link.
%1) lattice stick number of $3_1$ and $5_1$  are 12 and 16.
%
%2) compare with HNO result. 
%For $(p,1)$ and $(p,p-1)$, our upper bound is better
%

\end{document}